\documentclass[12pt]{amsart} 
\usepackage[ansinew]{inputenc}
\usepackage{fancyhdr}
\usepackage{epsfig}
\usepackage{epic}
\usepackage{eepic}
\usepackage{amsfonts}
\usepackage{threeparttable}
\usepackage{amscd}
\usepackage{here}
\usepackage{graphicx}
\usepackage{lscape}
\usepackage{hyperref}
\usepackage{tabularx}
\usepackage{subfigure}
\usepackage{longtable}
\usepackage[pctex32]{color}
\usepackage{colortbl}
\usepackage{amsmath,amsthm}
\usepackage{amssymb,latexsym}
\usepackage{enumerate}
\usepackage{mathrsfs}
\usepackage{layout}
\usepackage[none]{hyphenat}
\usepackage{rotating} 

\newtheorem{lemma}{Lemma} 
\newtheorem{teor}{\sc {\textbf {Theorem}}}
\newtheorem{defin}{\sc {\textbf {Definition}}}
\newtheorem{coro}{\sc {\textbf {Corollary}}}

\newcommand{\F}{\mathcal{F}}
\newcommand{\Df}{\mathsf{F}}

\title[Sectional-Hyperbolic Lyapunov Stable Sets]{Sectional-Hyperbolic Lyapunov Stable Sets} 
\author[Y. Sánchez, S. Bautista. ]{ Y. Sánchez, S. Bautista. }

\address{Y. S\'{a}nchez \\
Departamento de Matem\'{a}ticas \\ 
Universidad Nacional de Colombia, Bogot\'{a}, Colombia} 
\email{yasanchezr@unal.edu.co}
\address{S. Bautista \\
Departamento de Matem\'{a}ticas \\ 
Universidad Nacional de Colombia, Bogot\'{a}, Colombia} 
\email{sbautistad@unal.edu.co}

\date{\today} 

\keywords{Anosov Flow, Sectional-Anosov Flow, Sensitive.} 

\begin{document} 

\sloppy 

\begin{abstract}

In hyperbolic dynamics, a well-known result is: every hyperbolic Lyapunov stable set, is attracting; it's natural to wonder if this result is maintained in the sectional-hyperbolic dynamics. This question is still open, although some partial results have been presented. We will prove that all sectional-hyperbolic transitive Lyapunov stable set of codimension one of a vector field $X$ over a compact manifold, with unique singularity Lorenz-like, which is of boundary-type, is an attractor of $X$
\end{abstract}

\maketitle 

\section{Introduction}

In the theory of hyperbolic sets we have properties established for both flows and diffeomorphisms. Classical examples of these sets is Smale's horseshoe and the suspension of diffeomorphisms that exhibit hyperbolic sets. After appear a more general class of sets called sectional-hyperbolic sets containing the hyperbolic sets and non-hyperbolic sets like the geometric Lorenz attractor \cite{bm3}. Then a natural problem is to ask what results of the hyperbolic theory are still valid in the sectional-hyperbolic theory. The isolated non-trivial hyperbolic sets have periodic orbits by the shadowing lemma \cite{kh}, but not every sectional-hyperbolic set has periodic orbits as the cherry flow in the torus with a strong contraction (See \cite{bm} page 27). It was recently proved that the sectional-hyperbolic Lyapunov stable sets contain a non-trivial homoclinic class \cite{aml}.\\
 
On the other hand, we know that every hyperbolic Lyapunov stable set is an attracting. The proof of this, is based on the fact that the points in a hyperbolic set have an unstable manifold. In the sectional-hyperbolic set we do not have that unstable manifold guaranteed, except in its singularities and periodic orbits. In this paper we prove that every limit sectional-hyperbolic Lyapunov stable set of codimension one with a unique singularity Lorenz-like of boundary-type, is attracting, for this, we need to use particular case of the connecting lemma for sectional-hyperbolic sets in codimension one presented in \cite{bss}. Below we will specify the definitions and results over sectional-hyperbolic dynamics, with it we works.\\

Hereafter $M$ will be a compact manifold  possibly with nonempty boundary endowed  with a Riemannian metric $\langle\cdot,\cdot\rangle$ an induced norm $||\cdot||$. Given $X$ an $C^1$ vector field, inwardly transverse to the boundary (if nonempty) we call $X_t$ its induced {\em flow} over $M$. Define the {\em maximal invariant set} of $X$
$$
M(X)=\displaystyle\bigcap_{t\geq0}X_t(M).
$$
The \textit{orbit} of a point $p \in M(X)$ is defined by $\mathcal{O}(p)=\{X_t(p)\,|\,t\in\mathbb{R}\}$.
A {\em singularity} will be a zero $q$ of $X$, i.e. $X(q)=0$ (or equivalently $\mathcal{O}(q)=\{q\}$)
and a {\em periodic orbit} is an orbit $\mathcal{O}(p)$ such that $X_T(p)=p$ for some minimal $T>0$ and $\mathcal{O}(p)\neq \{p\}$. By a {\em closed orbit}
we mean a singularity or a periodic orbit.\\

Given $p\in M$ we define the {\em omega-limit set},
$\omega_X(p)=\{x\in M\,|\,x=\lim_{n\rightarrow\infty}X_{t_n}(p)$ for some sequence $t_n\rightarrow\infty\}$, if $p \in M(X)$, define the {\em alpha-limit set}
$\alpha_X(p)=\{x\in M:x=\lim_{n\to{\infty}} X_{-t_n}(p),
$ for some sequence $ t_n\to \infty\}$.\\

A compact subset $\Lambda$ of $M$ is called {\em invariant} if
$X_t(\Lambda)=\Lambda$ for all $t\in\mathbb{R}$; {\em transitive} if $\Lambda = \omega_X(p)$ for some $p\in \Lambda$. A compact invariant set $\Lambda$ is {\em attracting} if there is a neighborhood $U$ such that
\[\Lambda=\cap_{t \geq 0}X_t(U),\] 
and is {\em attractor} of $X$, if is an attracting set $\Lambda$  which is {\em transitive}. On the other hand, a compact invariant set $\Lambda$ is {\em Lyapunov stable}, if for all neighborhood $U$ of $\Lambda$, exists a neighborhood $W$ such that: $X_t(p)\in U$ for every $t\geq 0$ and $p\in W$. 

\begin{defin}
A compact invariant set $\Lambda \subseteq M(X)$ is {\em hyperbolic} if there are positive constants $K,\lambda$and a continuous $DX_t$-invariant splitting of tangent bundle  $T_{\Lambda}M=E^s_{\Lambda}\oplus E^X_{\Lambda}\oplus E^u_{\Lambda}$,  
 such that for every $x \in \Lambda$ and $t \geq 0$:
\begin{enumerate}
\item [$(1)$] $\| DX_t(x)v^s_x \| \leq K e^{-\lambda t}\| v^s_x \|,\ \ \forall v^s_x \in E^s_x$; 
\item [$(2)$] $ \| DX_t(x)v^u_x \| \geq K^{-1} e^{\lambda t} \| v^u_x \|,\ \ \forall v^u_x \in E^u_x$; 
\item [$(3)$] $E^{X}_{x}=\left\langle X(x)\right\rangle $.
\end{enumerate}
\end{defin}

If $E^s_x\neq 0$ and $E^u_x\neq 0$ for all $x\in \Lambda$ we will say that $\Lambda$
is a {\em saddle-type hyperbolic set}. A closed orbit is hyperbolic if it does as a compact invariant set of $X$.\\

The invariant manifold theory \cite{hps} asserts that if $H\subseteq M$ is hyperbolic set of $X$ and $p \in H$, then the topological sets:
$$W^{ss}(p)=\{q \in M: \lim_{t\to\infty} d(X_t(q),X_t(p))=0\}$$
y
$$W^{uu}(p)=\{q \in M(X): \lim_{t\to- \infty} d(X_t(q),X_t(p))=0\}$$

they are $C^1$ manifolds in $M$, so-called strong stable and unstable manifolds, tangent at $p$ to the subbundles
$E^s_p$ and $E^u_p$ respectively. Saturating them with the flow we obtain the stable and unstable 
manifolds $W^{s}(p)$ and $W^{u}(p)$ respectively, which are invariant. If $p,p' \in H$, we have to  
$W^{ss}(p)$ and $W^{ss}(p')$ are same or disjoint (similarly for $W^{uu}$).\\

An {\em homoclinic class} $H_X(p)$ associated to a hyperbolic periodic point $p$ of $X$ is the closure of the transverse intersections between $W^s_X(p)$ and $W^u_X(p)$, i.e.
\[H_X(p)=CL(W^u_X(p)\pitchfork W^s_X(p)).\]
We say that $H\subset M$ is a {\em homoclinic class} of $X$ if $H=H_X(p)$ for some hyperbolic periodic point $p$ of $X$.

\begin{defin}
A compact invariant set $\Lambda \subseteq M(X)$ is {\em sectional-hyperbolic} if every singularity in $\Lambda$ is hyperbolic (as invariant set) and there are a continuous $DX_t$-invariant splitting of tangent bundle $T_{\Lambda} M = \Df^s_{\Lambda}\oplus \Df^c_{\Lambda}$, and positive constants $K,\lambda$
such that for every $x \in \Lambda$ and $t \geq 0$:

\begin{enumerate}
\item
$\| DX_t(x)v^s_x \| \leq K e^{-\lambda t}\| v^s_x \| ,\ \ \forall v^s_x \in \mathsf{F}^s_x$;
\item
$\| DX_t(x)v^s_x \| \cdot \| v^c_x \|  \leq K e^{-\lambda t} 
\| DX_t(x)v^c_x \| \cdot \| v^s_x \|,\ \ \forall v^s_x \in \mathsf{F}^s_x,\ \ \forall v^c_x \in \mathsf{F}^c_x$;
\item
$ \| DX_t(x)u^c_x , DX_t(x)v^c_x  \|_{X_t(x)} \geq K^{-1} e^{\lambda t}  \| u^c_x , v^c_x \|_x, \ \ 
\forall u^c_x , v^c_x \in \mathsf{F}^c_x$.
Where $||\cdot, \cdot ||_x$ it is induced $2$-norm by the Riemannian metrics 
$\langle \cdot, \cdot \rangle_x$ of $T_x\Lambda$, given by
$$||v_x,u_x||_x=\sqrt{\langle v_x,v_x \rangle_x \cdot \langle u_x,u_x\rangle_x - \langle v_x,u_x 
\rangle_x^2}$$ 
for all $x \in \Lambda$ and every $u_x,v_x\in T_x\Lambda$ 
\end{enumerate}
\end{defin}

The third condition guarantees, the increase exponential of the area of parallelograms in the central subbundle $\mathsf{F}^c$. Since $ X(x) \in \Df^c_x$ for all $x \in \Lambda$ (see lemma 4 in \ cite {bm}), we will require that the dimension of the central subbundle must be greater than or equal to $2$. In the particular case where the $ dim (\mathsf{F}^c_x) = 2$ we will say that $\Lambda$ is a sectional-hyperbolic set of \textit{codimension $1$}. \\

Also the invariant manifold theory \cite{hps} asserts that through any point $x$ of a sectional-hyperbolic set $\Lambda$ it passes a 
strong stable manifolds $\F^{ss}(x)$, tangent at $x$ to the subbundle $\Df^s_x$, which induces an foliation over $\Lambda$; saturating them with the flow we obtain the invariant manifold $\F^s(x)$.\\

Unlike hyperbolic sets, the sectional-hyperbolic sets can have regular orbits accumulating singularities. We have to:

\begin{lemma}\label{lema1}
If $\Lambda \subseteq M(X)$ is sectional-hyperbolic set, and $\sigma$ is an singularity in $\Lambda$ then:
$$\F^{ss}(\sigma) \cap \Lambda = \{\sigma\}$$
\end{lemma}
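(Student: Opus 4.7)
The plan is to argue by contradiction: suppose there exists $y \in \F^{ss}(\sigma) \cap \Lambda$ with $y \neq \sigma$, and show that $y$ is forced to be a singularity, after which the dynamics confine that singularity to be $\sigma$ itself.

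The first step is to observe that the leaf $\F^{ss}(\sigma)$ is invariant under the flow. Because $X_s(\sigma) = \sigma$ for every $s$, the defining property of $\F^{ss}(\sigma)$ (forward orbit converging to $\sigma$ at the strong-stable exponential rate) is preserved under time translation. Hence $X_s(y) \in \F^{ss}(\sigma)$ for every $s \in \mathbb{R}$, so the whole orbit $\mathcal{O}(y)$ lies inside $\F^{ss}(\sigma)$.

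The second step combines this with the two available constraints on $X(y)$. By the invariant manifold theory \cite{hps}, $\F^{ss}(\sigma)$ is a $C^1$ submanifold whose tangent space at any point $z \in \F^{ss}(\sigma)\cap\Lambda$ is $\Df^s_z$; since the orbit of $y$ is contained in $\F^{ss}(\sigma)$, the velocity $X(y)$ lies in $T_y \F^{ss}(\sigma) = \Df^s_y$. On the other hand, the fact already cited from Lemma 4 of \cite{bm} gives $X(y) \in \Df^c_y$. Because $T_\Lambda M = \Df^s_\Lambda \oplus \Df^c_\Lambda$ is a direct sum, $\Df^s_y \cap \Df^c_y = \{0\}$, so $X(y) = 0$. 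Thus $y$ is a singularity; but then $\mathcal{O}(y) = \{y\}$, and since this orbit converges to $\sigma$ we must have $y = \sigma$, contradicting $y \neq \sigma$.

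The only real technical point, rather than a genuine obstacle, is the identification in the second step of $T_y \F^{ss}(\sigma)$ with $\Df^s_y$ for a point $y \neq \sigma$ on the leaf; this is the standard compatibility between the strong stable foliation of the sectional-hyperbolic set $\Lambda$ and the classical invariant manifold at the hyperbolic fixed point $\sigma$. Everything else in the argument is a direct consequence of the flow-invariance of $\F^{ss}(\sigma)$, the $\Df^s \oplus \Df^c$ splitting, and the property $X(x)\in\Df^c_x$.
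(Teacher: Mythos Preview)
Your argument is correct. The paper itself does not supply a proof of this lemma: it simply cites Corollary~2 of \cite{bm}. What you have written is precisely the standard argument behind that corollary --- flow-invariance of the strong stable leaf through a fixed point, combined with the two constraints $X(y)\in T_y\F^{ss}(\sigma)=\Df^s_y$ and $X(y)\in\Df^c_y$ forcing $X(y)=0$, and then the trivial observation that a fixed point on $\F^{ss}(\sigma)$ must be $\sigma$ itself. So there is nothing to compare: you have reproduced the expected proof where the paper only gave a pointer. The one caveat you flag (that $T_y\F^{ss}(\sigma)=\Df^s_y$ for $y\in\Lambda$) is handled by the lamination property of the strong stable leaves: since $y\in\Lambda\cap\F^{ss}(\sigma)$, the leaf $\F^{ss}(y)$ coincides with $\F^{ss}(\sigma)$, and $\F^{ss}(y)$ is tangent to $\Df^s_y$ by construction.
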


\begin{proof}
See corollary 2 in \cite{bm}.
\end{proof}

All singularity $\sigma$ in an sectional-hyperbolic set, is hyperbolic, so your invariant manifolds $W^{uu}(\sigma)$ and $W^{ss}(\sigma)$ are well defined. The strong  stable manifold sectional $\F^{ss}(\sigma)$ it is a submanifold of $W^{ss}(\sigma)$, with respect to your dimension, exists two possibilities:

\begin{enumerate}
\item $dim(W^{ss}(\sigma)) = dim(\F^{ss} (\sigma))$, in this case $W^{ss}(\sigma) = \F^{ss} (\sigma)$;
\item $dim(W^{ss}(\sigma)) = dim(F^{ss}(\sigma)) + 1$, in this case, we say that the singularity is {\em Lorenz-like}.
\end{enumerate}

Every singularity Lorenz-like is type-saddle hyperbolic set with at least two negative eigenvalues, one of which is real eigenvalue $\lambda$ with multiplicity one such that the real part of the other eigenvalues are outside the closed interval
$[\lambda, -\lambda]$.\\ 

Over a Lorenz-like singularity $\sigma \in \Lambda$, we have $F^{ss}(\sigma)$ is tangent to the subspace associated the eigenvalues with real part less than $\lambda$, and $\F^{ss}(\sigma)$ divide a $W^{ss}(\sigma)$ in two connected component. If $\Lambda$ intersect just one connected component of $W^{ss}(\sigma) \setminus \F^{ss}(\sigma)$, we say that the singularity Lorenz Like is {\em of boundary-type}.\\

On the other hand, we have to:

\begin{lemma}
Let $\Lambda \subseteq M(X)$ a sectional-hyperbolic set and $\sigma$ an singularity it's not Lorenz-like in $\Lambda$. If exists an sequence $x_n\in \Lambda$ of regular points, such that $x_n\to \sigma$, then $x_n \in W^{uu}(\sigma)$ for $n$ large enough.  
\end{lemma}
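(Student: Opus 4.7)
The plan is to argue by contradiction: assume, after passing to a subsequence if necessary, that $x_n\notin W^{uu}(\sigma)$ for all $n$, and derive a contradiction with Lemma \ref{lema1}. First I exploit the non-Lorenz-like hypothesis: since $\dim W^{ss}(\sigma)=\dim \F^{ss}(\sigma)$ and $\F^{ss}(\sigma)\subseteq W^{ss}(\sigma)$, we get $W^{ss}(\sigma)=\F^{ss}(\sigma)$, so Lemma \ref{lema1} gives
\[
W^{ss}(\sigma)\cap\Lambda=\{\sigma\},
\]
i.e.\ no point of $\Lambda$ besides $\sigma$ has forward orbit converging to $\sigma$.

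Next I would fix a small neighborhood $V$ of $\sigma$ on which the flow is well-approximated by its linearization $e^{t\,DX(\sigma)}$, with local invariant manifolds $W^{s}_{\mathrm{loc}}(\sigma)$ and $W^{u}_{\mathrm{loc}}(\sigma)$. For $n$ large, $x_n\in V$; since $x_n\notin W^{uu}(\sigma)\supseteq W^{u}_{\mathrm{loc}}(\sigma)$, the backward orbit of $x_n$ must leave $V$. Let $t_n>0$ be the least time such that $z_n:=X_{-t_n}(x_n)\in\partial V$; by minimality the arc joining $z_n$ to $x_n$ stays in $\overline{V}$, and $z_n\in\Lambda\cap\partial V$. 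Continuity of the flow together with $X_{-t}(\sigma)=\sigma\notin\partial V$ forces $t_n\to\infty$ (as $x_n\to\sigma$). By compactness a subsequence $z_{n_k}\to z\in\Lambda\cap\partial V$.

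The heart of the proof is to show $z\in W^{ss}(\sigma)$. In linearizing coordinates $(y^s,y^u)\in E^{s}_{\sigma}\oplus E^{u}_{\sigma}$, the fact that the orbit arc from $z_n$ to $x_n$ remains in $\overline{V}$ yields an estimate of the form $|x_n^u|\ge C\, e^{\mu t_n}|z_n^u|$, where $\mu>0$ is the expansion rate of $DX(\sigma)$ on $E^{u}_{\sigma}$. Since $|x_n^u|\to 0$ while $t_n\to\infty$, we deduce $|z_n^u|\to 0$, whence $z\in W^{s}_{\mathrm{loc}}(\sigma)\subseteq W^{ss}(\sigma)$. But $W^{ss}(\sigma)\cap\Lambda=\{\sigma\}$, so $z=\sigma$, contradicting $z\in\partial V$. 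This contradiction yields $x_n\in W^{uu}(\sigma)$ for all $n$ large enough.

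The main obstacle is justifying the unstable expansion estimate along an orbit arc of unbounded length $t_n$ that stays in $\overline{V}$: one must control the nonlinear error uniformly in $n$. This is standard once $V$ is chosen sufficiently small in an adapted Lyapunov metric (as in \cite{hps}); equivalently, an unstable cone field valid throughout $V$ and the characterization of $W^{u}_{\mathrm{loc}}(\sigma)$ as the graph of points with backward orbit in $V$ give the same conclusion without an explicit appeal to Hartman--Grobman.
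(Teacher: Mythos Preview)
Your argument is correct and follows essentially the same route as the paper: reduce to $W^{ss}(\sigma)\cap\Lambda=\{\sigma\}$ via the non-Lorenz-like hypothesis and Lemma~\ref{lema1}, then argue by contradiction that backward orbits of the $x_n$ accumulate on a point of $\Lambda\cap(W^{ss}(\sigma)\setminus\{\sigma\})$. The only difference is one of detail: the paper simply asserts the existence of such an accumulation point in $Cl\big(\bigcup_k \mathcal{O}^-(x_{n_k})\big)\cap(W^{ss}(\sigma)\setminus\{\sigma\})$, whereas you supply the local exit-time construction and the unstable-coordinate estimate that justify this step rigorously.
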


\begin{proof}
As $\sigma$ it's not Lorenz-like, we have to $\F^{ss}(\sigma)=W^{ss}(\sigma)$ and by lemma \ref{lema1}, $W^{ss}(\sigma) \cap \Lambda =\{\sigma\}$, then $x_n \notin W^{ss}(\sigma)$, for all $n$. Suppose that exists an subsequence $X_{n_k}$ such that $x_{n_k} \notin W^{uu}(\sigma)$, then exists a regular point $y \in M$, such that:
 $$y \in Cl \left( \bigcup_{k=1}^\infty \mathcal{O}^-(x_{n_k}) \right) \cap  (W^{ss}(\sigma)\setminus \{\sigma\})$$
 but as $x_{n_k}\in \Lambda$, so by the compactness of $\Lambda$, $y \in \Lambda \cap W^{ss}(\sigma) \setminus \{\sigma\}$, this is a contradiction, thus $x_n \in W^{uu}(\sigma)$ for $n$ large enough. 
\end{proof}

Then, the only singularities that can be accumulated by positive orbits of regular points in a sectional-hyperbolic set, are Lorenz-like.

\begin{coro}\label{coro2}
If $\Lambda \subseteq M(X)$ is a sectional-hyperbolic set, $\Lambda$ it's not a singularity and exists $q\in M$ such that $\Lambda=\alpha(q)$ or $\Lambda=\omega(q)$, then every singularity in $\Lambda$ is Lorenz-like.
\end{coro}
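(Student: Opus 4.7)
The plan is to argue by contradiction: assume $\sigma\in\Lambda$ is a singularity that is not Lorenz-like, and produce a point of $\F^{ss}(\sigma)\cap\Lambda\setminus\{\sigma\}$ to contradict Lemma \ref{lema1}. I handle the case $\Lambda=\omega_X(q)$; the case $\Lambda=\alpha_X(q)$ is completely analogous, since the hyperbolic structure at $\sigma$ is time-symmetric and the entry-point argument below runs the same way on the negative semiorbit of $q$. The hypothesis $\Lambda\neq\{\sigma\}$ forces $q\notin W^{s}(\sigma)$, for otherwise $X_t(q)\to\sigma$ and $\omega_X(q)=\{\sigma\}$. Combined with $\sigma\in\omega_X(q)$, this means the forward orbit of $q$ approaches $\sigma$ arbitrarily closely infinitely often without ever being captured by it.

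I would then fix a small ball $U=B_\varepsilon(\sigma)$ inside a Hartman--Grobman chart for $\sigma$. Because there are $t_n\to\infty$ with $X_{t_n}(q)\to\sigma$ while $X_t(q)\not\to\sigma$, the forward orbit of $q$ performs an infinite sequence of maximal visits $[a_n,b_n]\subset[0,\infty)$ to $U$, with $a_n\to\infty$ and closest approach to $\sigma$ along $[a_n,b_n]$ tending to $0$. Let $p_n=X_{a_n}(q)\in\partial U$ be the entry points. Every accumulation point of $(p_n)$ lies in $\omega_X(q)=\Lambda$ (since $a_n\to\infty$) and has distance $\varepsilon$ from $\sigma$ (since $p_n\in\partial U$), hence is different from $\sigma$. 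The heart of the argument is to show that, after extracting a subsequence, $p_n\to p^{\ast}\in W^{s}(\sigma)\cap\partial U$: in linearized coordinates an orbit entering $U$, reaching distance $\delta$ of $\sigma$, and exiting again must do so nearly tangent to $W^{s}_{loc}(\sigma)$, with the distance from its entry point to $W^{s}(\sigma)\cap\partial U$ going to $0$ as $\delta\to 0$.

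Granting this, the contradiction is immediate: $\sigma$ being a singularity gives $W^{s}(\sigma)=W^{ss}(\sigma)$, and the non-Lorenz-like hypothesis yields $W^{ss}(\sigma)=\F^{ss}(\sigma)$, so $p^{\ast}\in\F^{ss}(\sigma)\cap\Lambda\setminus\{\sigma\}$ directly contradicts Lemma \ref{lema1}. The main obstacle is formalizing the ``entry points tend to $W^{s}_{loc}(\sigma)$'' step, which is a classical fact about hyperbolic saddles (a direct consequence of Hartman--Grobman, or of the $\lambda$-lemma applied to a disk transverse to $W^{s}(\sigma)$) but has to be invoked carefully here. Note that the preceding lemma is not strictly needed in this argument: its proof runs essentially the same local hyperbolic analysis on backward orbits of regular $x_n\in\Lambda$, whereas the present reasoning identifies $p^{\ast}$ directly from forward orbits of $q$.
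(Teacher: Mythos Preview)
Your argument is correct and captures the right mechanism. Note, however, that the paper does not spell out a proof of this corollary: it is presented as an immediate consequence of the preceding lemma, via the remark that ``the only singularities that can be accumulated by positive orbits of regular points in a sectional-hyperbolic set are Lorenz-like.'' The paper's implicit route is thus: pick regular $x_n\in\Lambda$ with $x_n\to\sigma$ (possible since $\omega$- and $\alpha$-limit sets are connected and hyperbolic singularities are isolated) and invoke that lemma. You instead bypass that lemma entirely and work directly with Lemma~\ref{lema1}, manufacturing the forbidden point of $\F^{ss}(\sigma)\cap\Lambda\setminus\{\sigma\}$ from the entry points of the $q$-orbit into a Hartman--Grobman neighbourhood. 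This is really the same geometric idea that drives the \emph{proof} of the preceding lemma (tracking orbits near $\sigma$ until they hit $W^{ss}(\sigma)$), applied directly to $\mathcal O^+(q)$ rather than to backward orbits of points already in $\Lambda$; your version has the virtue of being self-contained and of not requiring the approximating points to lie in $\Lambda$ a priori.

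One caveat on the $\alpha$-case: the phrase ``the hyperbolic structure at $\sigma$ is time-symmetric'' suggests simply reversing time, but that is not quite right here. The sectional-hyperbolic splitting is not invariant under $X\mapsto -X$, and Lemma~\ref{lema1} concerns $\F^{ss}$ for $X$; running your entry-point argument for the reversed flow would produce an accumulation point on $W^{uu}_X(\sigma)$, which yields no contradiction. The correct (and easy) fix is to keep analyzing the local dynamics in \emph{forward} time: the negative semiorbit of $q$ makes infinitely many maximal visits $[-b_n,-a_n]$ to $U$ with $a_n\le b_n\to\infty$, and the forward-time entry point of each such visit is $X_{-b_n}(q)$, which (by the same Hartman--Grobman estimate you state) accumulates on $W^s_{loc}(\sigma)=\F^{ss}(\sigma)$; since $b_n\to\infty$, any limit lies in $\alpha_X(q)=\Lambda$, and the contradiction with Lemma~\ref{lema1} follows as before.
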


We say that a cross section $\Sigma$ of $X$ is associated to a Lorenz-like singularity $\sigma$ in a sectional-hyperbolic set $\Lambda$, if $\Sigma$ is very close to $\sigma$, $\Sigma \cap \Lambda \neq \emptyset$ and one of the connected components of $W^{ss}(\sigma) \setminus \F^{ss}(\sigma)$ contains a point in $int(\Sigma)$.\\

Another important result about the sectional-hyperbolic sets, is the \emph{\textbf{hyperbolic lemma}} (see lemma 9 in \cite{bm}), which assert that any invariant subset $H$ without singularities of a sectional-hyperbolic set $\Lambda$,  is hyperbolic, in this case, we have to that $\Df^s_H=E^s_H$ and $\Df^c_H=E^u_H \oplus E^X_H$, so $W^{ss}(p)=\F^{ss}(p)$ for all $p \in H$.\\

Observe that the closed orbits of a sectional-hyperbolic set always have a hyperbolic structure, the periodic orbits by the hyperbolic lemma and the singularities by definition. the follow two definitions apply to sets whose closed orbits are hyperbolic:

\begin{defin}
A compact invariant set $\Lambda$ of a vector field $X$ over $M$, has the property $(P)$ if for all periodic orbit $O$ in $\Lambda$ exists an singularity $\sigma$ also in  $\Lambda$ such that $W^u(O) \cap W^s(\sigma)\neq \emptyset$.
\end{defin}

\begin{defin}
A sectional-hyperbolic set $\Lambda \in M(X)$, has the property $(S)$ if for all Lorenz-like singularity  $\sigma$ in $\Lambda$ exists an periodic orbit $p$ also in  $\Lambda$ such that $W^u(O) \cap W^s(\sigma)\neq \emptyset$.
\end{defin}

\section{Lyapunov Stable Attracting Sets}

In this section will establish some sufficient conditions for that a sectional-hyperbolic Lyapunov stable  set to be attracting.

\begin{lemma}\label{lema3}
Let $\Lambda$ a Lyapunov stable set of a vector field $X$ over $M$ then: 

\begin{enumerate}
\item $\{y \in M : d(X_{-t}(x),X_{-t}(y))\to 0$ when $t\to \infty \} \subseteq \Lambda$, $\forall x \in \Lambda$.
\item $\Lambda$ is a attracting of $X$ if and only if there is a neighborhood $U$ of $\Lambda$ such that $\omega_X(x)\subseteq \Lambda$ for all $x\in U$.
\end{enumerate}
\end{lemma}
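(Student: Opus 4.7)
My plan is to treat the two parts separately, using Lyapunov stability in both but in different ways. Part (1) follows quickly, while part (2) is two implications: the direct one is a formal consequence of Lyapunov stability and the converse needs a trick that relays the $\omega$-limit hypothesis through an $\alpha$-limit point back to the point $y$.

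For (1), I fix $x\in\Lambda$ and $y$ with $d(X_{-t}(x),X_{-t}(y))\to 0$, and show $y$ lies in every open neighborhood $U$ of $\Lambda$, which suffices since $\Lambda$ is closed. Given such $U$, Lyapunov stability produces an open $W\supseteq\Lambda$ with $X_s(W)\subseteq U$ for all $s\ge 0$, and compactness of $\Lambda$ yields $r>0$ with the $r$-neighborhood of $\Lambda$ sitting inside $W$. Because $X_{-t}(x)\in\Lambda$ and $d(X_{-t}(y),X_{-t}(x))\to 0$, for $t$ large $X_{-t}(y)$ lies in that $r$-neighborhood, hence in $W$, so $y=X_t(X_{-t}(y))\in X_t(W)\subseteq U$.

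For the direct implication in (2), from $\Lambda=\bigcap_{t\ge 0}X_t(U_0)$ I use Lyapunov stability to obtain an open $V\supseteq\Lambda$ with $X_t(V)\subseteq U_0$ for all $t\ge 0$; then for $x\in V$ the forward orbit is trapped in $U_0$, so $\omega_X(x)$ is a compact invariant subset of $\overline{U_0}$ and, by invariance, of $\bigcap_{t\ge 0}X_t(U_0)=\Lambda$. For the converse, I first construct an open positively invariant neighborhood $V$ of $\Lambda$ with $\overline V\subseteq U$ via $V=\bigcup_{t\ge 0}X_t(W)$, where $W$ comes from Lyapunov stability applied inside a slightly smaller $U_1$ with $\overline{U_1}\subseteq U$. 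I then claim $\Lambda=\bigcap_{t\ge 0}X_t(V)$. The inclusion $\supseteq$ is immediate from invariance. For $\subseteq$, take $y\in\bigcap_{t\ge 0}X_t(V)$, so $X_{-t}(y)\in V$ for all $t\ge 0$, and extract a subsequence $X_{-t_n}(y)\to z\in\overline V\subseteq U$; by hypothesis $\omega_X(z)\subseteq\Lambda$, so for any $\varepsilon>0$ there is $S>0$ with $d(X_S(z),\Lambda)<\varepsilon/2$. Continuity of the flow on the compact interval $[0,S]$ transfers this to $d(X_{S-t_n}(y),\Lambda)<\varepsilon$ for $n$ large; writing $y=X_{t_n-S}(X_{S-t_n}(y))$ and applying Lyapunov stability \emph{forward} from the $\varepsilon$-close point $X_{S-t_n}(y)$ bounds $d(y,\Lambda)$ by a quantity that shrinks with $\varepsilon$, forcing $y\in\Lambda$.

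The main obstacle is precisely this final step of the $\subseteq$ inclusion: since Lyapunov stability is a purely forward notion, $\omega_X(y)\subseteq\Lambda$ alone does not yield $y\in\Lambda$. The trick is to locate a point on the \emph{backward} orbit of $y$ close to $\Lambda$, produced by applying the $\omega$-limit hypothesis to an $\alpha$-point $z$ of $y$, and then to push that closeness \emph{forward} back to $y$ via Lyapunov stability. Everything else reduces to bookkeeping with the definitions.
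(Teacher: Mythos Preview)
The paper does not actually prove this lemma: it simply cites Lemmas~2.25 and~2.26 of Ara\'ujo--Pacifico \cite{ap}. So there is no in-paper argument to compare against; you have supplied a self-contained proof where the authors defer to a reference.

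Your argument is correct in all essentials. Part~(1) is clean. In part~(2), the converse implication (the ``hard'' direction) is handled well: the key idea---passing to an $\alpha$-accumulation point $z$ of $y$, using the hypothesis $\omega_X(z)\subseteq\Lambda$ to find a backward-orbit point of $y$ close to $\Lambda$, and then pushing forward via Lyapunov stability---is exactly the right maneuver and is carried out correctly.

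One minor technical point in the forward implication of~(2): from $X_t(x)\in U_0$ for all $t\ge 0$ you get $\omega_X(x)\subseteq\overline{U_0}$, and then you invoke invariance to place $\omega_X(x)$ inside $\bigcap_{t\ge 0}X_t(U_0)$. But invariance gives $\omega_X(x)=X_s(\omega_X(x))\subseteq X_s(\overline{U_0})$, not $X_s(U_0)$, so as written the inclusion into $\bigcap_{t\ge 0}X_t(U_0)$ is not quite justified. The fix is the same trick you already used in the converse: first choose $U_1$ open with $\Lambda\subseteq U_1\subseteq\overline{U_1}\subseteq U_0$, then take $V$ with $X_t(V)\subseteq U_1$ for all $t\ge 0$; now $\omega_X(x)\subseteq\overline{U_1}\subseteq U_0$, and invariance gives $\omega_X(x)\subseteq X_s(U_0)$ for every $s\ge 0$, hence $\omega_X(x)\subseteq\Lambda$.
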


\begin{proof}
See lemma 2.25 pag 35 and lemma 2.26 pag 36 en \cite{ap}.
\end{proof}

\begin{teor}\label{teor1}
Every sectional-hyperbolic Lyapunov stable set $\Lambda$ of a vector field $X$ over $M$, that satisfies the property $(S)$ and whose Lorenz-like singularities are of boundary type, is an attracting of $X$.
\end{teor}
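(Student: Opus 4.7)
The plan is to verify the hypothesis of Lemma~\ref{lema3}(2): find a neighborhood $U$ of $\Lambda$ such that $\omega_X(x)\subseteq\Lambda$ for every $x\in U$. Using Lyapunov stability I first select a positively invariant neighborhood $V$ of $\Lambda$ small enough that (i) the only singularities of $X$ in $\overline{V}$ are those of $\Lambda$, and (ii) the sectional-hyperbolic splitting on $T_\Lambda M$ extends by the standard cone-field argument to the maximal invariant set $\Lambda(V):=\bigcap_{t\in\mathbb{R}}X_t(V)$. Every compact invariant subset of $V$ is then sectional-hyperbolic, and after shrinking to $U\subseteq V$ with $X_t(U)\subseteq V$ for $t\ge 0$ I reduce to showing $\omega_X(x)\subseteq\Lambda$ for every $x\in U$.

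I would argue by contradiction: suppose $z\in\omega_X(x)\setminus\Lambda$. The whole orbit of $z$ is then contained in the sectional-hyperbolic set $\omega_X(x)\subseteq\Lambda(V)$. By Corollary~\ref{coro2} applied to $\omega_X(x)$, every singularity in $\omega_X(x)$ is Lorenz-like, and by (i) any such singularity already lies in $\Lambda$. If $\omega_X(x)$ contains no singularity, the hyperbolic lemma makes it hyperbolic, and a standard shadowing argument produces a periodic orbit $P\subseteq\alpha_X(z)\subseteq\omega_X(x)$; using Lyapunov stability and the extension of the $\F^s$-foliation to $\Lambda(V)$, one deduces $P\subseteq\Lambda$, and then $z\in W^u(P)$, so Lemma~\ref{lema3}(1) yields $z\in\Lambda$, contradicting $z\notin\Lambda$. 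The substantive case is the remaining one, in which $\omega_X(x)$ contains a Lorenz-like singularity $\sigma\in\Lambda$ accumulated by the orbit of $z$.

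In this genuinely singular case property~$(S)$ supplies a periodic orbit $O\subseteq\Lambda$ with $W^u(O)\cap W^s(\sigma)\neq\emptyset$, and Lemma~\ref{lema3}(1) upgrades the connection to $W^u(O)\subseteq\Lambda$. Fix a cross section $\Sigma$ associated to $\sigma$. The boundary-type hypothesis says that $\Lambda$ meets only one of the two sides of $\F^{ss}(\sigma)$ inside $W^{ss}(\sigma)$, so the trace of $W^u(O)\subseteq\Lambda$ on $\Sigma$ accumulates $\Sigma\cap W^s(\sigma)$ from that preferred side. A careful analysis of the Poincar\'e return map on $\Sigma$, combined with the continuous $\F^s$-foliation of $\Lambda(V)$, should then show that the orbit of $z$ cannot stay on the opposite side and is eventually trapped on a leaf $\F^s(p)$ with $p\in\Lambda$; Lemma~\ref{lema3}(1) again gives $z\in\Lambda$, the desired contradiction.

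The hard step is precisely this cross-section analysis. Converting ``boundary-type plus property~$(S)$'' into the statement ``any regular orbit of $\omega_X(x)$ approaching $\sigma$ sits on a strong-stable leaf of $\Lambda$'' requires balancing the local dynamics near the Lorenz-like singularity, the one-sided accumulation dictated by boundary-type, the orbits of $\Lambda$ filling the allowed side (through the connection $W^u(O)\cap W^s(\sigma)$), and the continuous extension of the $\F^s$-foliation to $\Lambda(V)$; this is the technical heart of the argument, and is precisely the place where the connecting-lemma-flavoured tools of \cite{bss} are meant to feed in.
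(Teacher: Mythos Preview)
Your overall framework---verify Lemma~\ref{lema3}(2)---matches the paper, but the way you case-split is different and introduces a real gap. The paper does not take an arbitrary $x\in U$ and analyze $\omega_X(x)$; it takes a sequence $x_n\to p$ with $p\in\Lambda$ and splits according to whether $\omega_X(p)$ (not $\omega_X(x_n)$) meets $Sing(X)$. Since $p\in\Lambda$ and $\Lambda$ is compact invariant, $\omega_X(p)\subseteq\Lambda$ automatically. In the non-singular case any $y\in\omega_X(p)$ therefore already lies in $\Lambda$, so $W^{uu}(y)\subseteq\Lambda$ by Lemma~\ref{lema3}(1); on a small cross section $\Sigma$ through $y$ the arc $W^{uu}(y)\cap\Sigma$ is transverse to the vertical foliation $\F^s_\Sigma$ and hence meets every leaf, so each nearby $x_n'$ lies on the strong stable leaf of a point of $\Lambda$. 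In the singular case the same mechanism works with $W^u(O)\subseteq\Lambda$ (coming from property~$(S)$ and Lemma~\ref{lema3}(1)) playing the role of $W^{uu}(y)$ on the cross section associated to $\sigma$; the boundary-type hypothesis forces the positive orbit of $p$, hence of $x_n$, to hit $int(\Sigma)$ on the side where $W^u(O)$ accumulates. No shadowing, no extension of sectional-hyperbolicity to a larger invariant set, and no tools from \cite{bss} are used here---\cite{bss} only enters in the Main Theorem to \emph{produce} property~$(S)$ from transitivity in codimension one.

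The gap in your Case~1 is the chain ``shadowing gives $P\subseteq\alpha_X(z)$'' followed by ``$P\subseteq\Lambda$''. Shadowing near a hyperbolic set yields periodic orbits \emph{close to} the set, not inside $\alpha_X(z)$ or $\omega_X(x)$, and even granting a periodic orbit $P$ near $\Lambda$ you give no mechanism forcing $P$ into $\Lambda$: the union $\bigcup_{q\in\Lambda}\F^{ss}(q)$ is only a lamination and need not cover a neighborhood of $\Lambda$, so a nearby periodic orbit has no reason to sit on a strong stable leaf through $\Lambda$. The paper's choice of reference point sidesteps this entirely: it never needs to locate a hyperbolic point \emph{outside} $\Lambda$; the point $y$ (or the orbit $O$) is in $\Lambda$ from the start, and it is its \emph{unstable} manifold---contained in $\Lambda$ by Lyapunov stability---that saturates the cross section and guarantees that every nearby stable leaf meets $\Lambda$.
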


\begin{proof}
Denote by $T_\Lambda M = \Df^s_\Lambda \oplus \Df^c_\Lambda$ the sectional splitting of $\Lambda$, which we can extends a $T_U M= \Df^s_U \oplus \Df^c_U$ where  $U$ is a neighborhood of $\Lambda$ in $M$; this extension is continuous for $\Df^c_U$ and integrable for $\Df^s_U$. In that follows we will hold the neighborhood $U$ of $\Lambda$.\\

To prove that $\Lambda$ is an attracting, we will use the item (2) of the lemma \ref{lema1}, so, it suffices to prove that: if $x_n \in M$ is a sequence converging to $p \in \Lambda$, then $\omega_X (x_n) \subseteq \Lambda$ for $n$ large. We have two possible cases:\\

\begin{enumerate}
\item[{\bf Case 1:}] {\it $Sing(X) \cap \omega_X(p)=\emptyset$}. So, by the hyperbolic lemma, $\omega_X(p)$ is a saddle-type hyperbolic set. Choose $y \in \omega_X(p)$, then $W^{uu}(y)$ is well defined and by the item (1) of the lemma \ref{lema1} we have that $W^{uu}(y)\subseteq \Lambda$. Let $\Sigma \subseteq U$  a cross section of $X$ with $y \in int(\Sigma)$. Denote by $\F^s_{\Sigma}$ the {\em vertical foliation} of $\Sigma$ obtained by projecting $\F^{ss}$ into $\Sigma$ along the flow of $X$, (i.e. $\F^s(x,\Sigma)$ is the leaf in $\Sigma$ obtained by projecting the leaf $\F^{ss}(x)$ in $\Sigma$ along the flow of $X$, for all $x \in \Sigma$). Choose $\Sigma$ small in size, we have that $W^{uu}(y) \cap \F^s(x,\Sigma) \neq \emptyset$ for all $x\in \Sigma$.\\

As $y \in \omega_X(p)$ we have that the positive orbit of $p$ intersect a $\Sigma$, this implies that, for $n$ large, the positive orbit of $x_n$ also does in a point $x'_n$, since $x_n \to p$; then, there exists $z_n \in \F^s(x'_n,\Sigma) \cap W^{uu}(p) \subseteq \Lambda$, such that $x'_n \in \F^{ss}(z_n)$. So, $\omega_X(x_n) = \omega_X(z_n) \subseteq \Lambda$.\\

\item[{\bf Case 2:}] {\it $Sing(X) \cap \omega_X(p) \neq \emptyset$}. Let $\sigma \in \omega_X(p)$, then $\sigma$ it is accumulated by the positive orbit of $p \in \Lambda$ and therefore is Lorenz-Like singularity. By the property ($S$), exists a hyperbolic periodic orbit $O \in \Lambda$ such that $z \in W^u(O) \cap W^s(\sigma)$ for some point $z\in M$. As $\Lambda$ is a Lyapunov stable set, then $z\in W^u(O) \subseteq \Lambda$. Since $\F^{ss}(\sigma) \cap \Lambda = \{ \sigma \}$, so $z \in W^{ss}(\sigma)\setminus \F^{ss}(\sigma)$.\\ 

Let $\Sigma \subset U$ a cross section associated a $\sigma$, as this it is singularity of boundary-type, exists $z' \in Int(\Sigma) \cap \mathcal{O}(z)$. Denote by $\F^s_\Sigma$ the {\em vertical foliation} of $\Sigma$ and by $\partial^v \Sigma$ and $\partial^h \Sigma$ the vertical and horizontal boundary of $\Sigma$ respectively. We will assume that the vertical boundary $\partial^v \Sigma$ is formed by leaves of the foliation $\F^s_\Sigma$, and $\partial^h \Sigma$ is transverse to $\F^s_\Sigma$. Given that $\F^{ss}(\sigma) \cap \Lambda = \{ \sigma \}$, $\Sigma$ can be choose such that  $\partial^h \Sigma \cap \Lambda = \emptyset$. In addition, every orbit that accumulate to $\sigma$ necessarily, accumulate the  leaf $\F^s(z',\Sigma)$ in $\Sigma$ .\\

Now, since $z'\in int(\Sigma )\cap W^{u}(O) \cap W^s(\sigma) \subseteq \Lambda$, then we can choose $\Sigma$ small in size such that: $$\F^s(x,\Sigma) \cap \Lambda \ne \emptyset,$$ for all $x \in int(\Sigma)$ next to $\F^s(z',\Sigma)$. As $\sigma \in \omega_X(p)$ and $\sigma$ is of boundary-type, necessarily the  positive orbit of $p$ intersects to $\F^s(z',\Sigma)$ or intersects infinite times to $\Sigma$ accumulating to  $\F^s(z',\Sigma)$, then for $n$ large enough we have the positive orbit of $x_n$ intersects to $int(\Sigma)$, next to $\F^s(z',\Sigma)$. From which it is concluded that, for  $n$ large, exists $z_n \in \Lambda$ such that $x_n \in \F^{s}(z_n)$. Then, $\omega_X(x_n) = \omega_X(z_n) \subseteq \Lambda$.
\end{enumerate}
\end{proof}

\begin{coro}
Every sectional-hyperbolic Lyapunov stable set $\Lambda$ of a vector field $X$ over $M$, without Lorenz-like singularities, is an attracting of $X$.
\end{coro}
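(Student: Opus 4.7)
The plan is to derive the corollary as a direct consequence of Theorem \ref{teor1}, by observing that its two auxiliary hypotheses, namely property $(S)$ and the requirement that every Lorenz-like singularity be of boundary type, are both universally quantified over the Lorenz-like singularities of $\Lambda$. If $\Lambda$ has none, both conditions are vacuously satisfied and the theorem may be invoked.

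To make this rigorous I would trace through the proof of Theorem \ref{teor1} and check that the auxiliary hypotheses are used exclusively in Case 2, where one assumes $Sing(X) \cap \omega_X(p) \neq \emptyset$. Under the corollary's assumption Case 2 never arises: if $p \in \Lambda$ is a regular point whose positive orbit accumulates a singularity $\sigma \in \omega_X(p) \subseteq \Lambda$, then the lemma preceding Corollary \ref{coro2} forces $\sigma$ to be Lorenz-like, contradicting the hypothesis. Hence $\omega_X(p)$ contains no singularities, the hyperbolic lemma applies to give $\omega_X(p)$ hyperbolic of saddle type, and Case 1 of the theorem goes through verbatim, producing for each sequence $x_n \to p$ a point $z_n \in \Lambda$ with $x_n \in \F^{ss}(z_n)$ and hence $\omega_X(x_n) = \omega_X(z_n) \subseteq \Lambda$. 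Together with Lemma \ref{lema3}(2), this yields that $\Lambda$ is attracting.

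The one remaining sub-case is when $p \in \Lambda$ is itself a singularity; by hypothesis it is necessarily non-Lorenz-like. Here I would argue that $W^{ss}(\sigma) = \F^{ss}(\sigma)$ combined with Lemma \ref{lema1} gives $W^{ss}(\sigma) \cap \Lambda = \{\sigma\}$, so the strong stable set of $\sigma$ inside $\Lambda$ is trivial. Together with the saddle-hyperbolicity of $\sigma$, the fact (again from the lemma preceding Corollary \ref{coro2}) that no regular orbit of $\Lambda$ accumulates $\sigma$, and the Lyapunov stability of $\Lambda$, this forces a whole neighborhood of $\sigma$ to fall under the scope of Case 1, and the argument above applies again.

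The main obstacle is essentially a bookkeeping one rather than a deep analytic difficulty: confirming that the proof of Theorem \ref{teor1} does not silently presuppose the existence of at least one Lorenz-like singularity, and ensuring that a possible non-Lorenz-like singularity $\sigma \in \Lambda$ cannot smuggle us into the Case 2 branch of the argument. Once these points are verified, the corollary follows immediately from the theorem.
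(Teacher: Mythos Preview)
Your proposal is correct and matches the paper's intended argument: the corollary is stated without proof precisely because properties $(S)$ and the boundary-type condition are vacuous when $\Lambda$ has no Lorenz-like singularities, so Theorem~\ref{teor1} applies directly. Your first paragraph already constitutes a complete proof; the subsequent case-tracing is unnecessary (though not wrong), and your handling of the case $p=\sigma$ a non-Lorenz-like singularity is a bit loose, but this does not matter since the vacuous-hypothesis argument settles the corollary outright without reopening the proof of the theorem.
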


\begin{coro}\label{coro1}
Every sectional-hyperbolic Lyapunov stable set $\Lambda$ of a vector field $X$ over $M$, that satisfies the property $(P)$ with a unique singularity Lorenz-like, which is of boundary type, is an attracting of $X$.
\end{coro}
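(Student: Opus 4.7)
The plan is to reduce this corollary to Theorem \ref{teor1} by verifying that property $(S)$ is automatically satisfied under the given hypotheses. Since Theorem \ref{teor1} already handles the sectional-hyperbolic Lyapunov stable case with property $(S)$ and boundary-type Lorenz-like singularities, all the analytic work (cross-sections, vertical foliations, projection arguments) is done there; what remains is a purely logical bridge between $(P)$ and $(S)$ in the presence of uniqueness of the singularity.

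First I would invoke the result cited in the introduction from \cite{aml}, which says that any sectional-hyperbolic Lyapunov stable set contains a non-trivial homoclinic class $H_X(p)$ for some hyperbolic periodic point $p$ of $X$. In particular, $\Lambda$ contains at least one hyperbolic periodic orbit $O = \mathcal{O}(p) \subseteq \Lambda$; hyperbolicity is automatic from the hyperbolic lemma since $O$ contains no singularities.

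Next I would apply property $(P)$ to this periodic orbit $O \subseteq \Lambda$: it yields a singularity $\sigma' \in \Lambda$ with $W^u(O) \cap W^s(\sigma') \neq \emptyset$. By hypothesis $\Lambda$ contains a unique Lorenz-like singularity $\sigma$, and this is the only singularity in $\Lambda$, so necessarily $\sigma' = \sigma$. Rewriting this conclusion from the viewpoint of $\sigma$, we have exhibited a periodic orbit $O \subseteq \Lambda$ with $W^u(O) \cap W^s(\sigma) \neq \emptyset$; this is exactly the statement that $\Lambda$ enjoys property $(S)$ at its unique Lorenz-like singularity. Since $\sigma$ is of boundary type by hypothesis, Theorem \ref{teor1} applies directly and gives that $\Lambda$ is attracting.

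The only non-trivial step is the appeal to \cite{aml} to guarantee the existence of a periodic orbit inside $\Lambda$; without that one could not even begin to use property $(P)$, which is vacuous if $\Lambda$ contains no periodic orbits. Everything else is bookkeeping and the uniqueness clause on singularities, which is what lets $(P)$ (a statement quantified over periodic orbits) be converted into $(S)$ (a statement quantified over Lorenz-like singularities). If instead one reads the hypothesis as only asserting a unique \emph{Lorenz-like} singularity while allowing other non-Lorenz-like singularities in $\Lambda$, a little more care would be needed: one would have to argue that the singularity $\sigma'$ produced by $(P)$ can be taken to be $\sigma$, for instance by first choosing a periodic orbit inside the homoclinic class whose positive orbit accumulates $\sigma$ and then invoking Corollary \ref{coro2} to rule out non-Lorenz-like singularities as $\omega$-limit candidates.
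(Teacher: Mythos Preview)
Your outline matches the paper's proof almost exactly: invoke \cite{aml} to produce a periodic orbit $O\subset\Lambda$, apply property $(P)$ to get a singularity $\sigma'$ with $W^u(O)\cap W^s(\sigma')\neq\emptyset$, identify $\sigma'$ with the unique Lorenz-like singularity $\sigma$, and then apply Theorem~\ref{teor1}.

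The one place where the paper is more careful is the identification $\sigma'=\sigma$. You simply read the hypothesis as ``$\sigma$ is the only singularity in $\Lambda$''; the paper instead allows for other (non-Lorenz-like) singularities and argues as follows: since $\Lambda$ is Lyapunov stable, Lemma~\ref{lema3}(1) gives $W^u(O)\subseteq\Lambda$, so any regular point $z\in W^u(O)\cap W^s(\sigma')$ lies in $\Lambda$ and its positive orbit (inside $\Lambda$) accumulates on $\sigma'$; hence $\sigma'$ must be Lorenz-like, and uniqueness of the Lorenz-like singularity forces $\sigma'=\sigma$. This is shorter and cleaner than the alternative you sketch in your last paragraph (selecting a special periodic orbit and invoking Corollary~\ref{coro2}), and it uses only what is already at hand: Lyapunov stability and the fact that only Lorenz-like singularities can be accumulated by regular orbits of $\Lambda$.
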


\begin{proof}
Let $\sigma$, the only Lorenz-like singularity in $\Lambda$. By theorem 1.1 in \cite{aml}, $\Lambda$ has a nontrivial homoclinic class, and therefore a periodic point $q$. Since $\Lambda$ satisfies the property $(P)$, then there exists $\sigma^* \in \Lambda \cap Sing(X)$ such that $W^u(q)\cap W^s(\sigma^*)\neq \emptyset$, as $W^u(q) \subseteq \Lambda$ for being $\Lambda$ Lyapunov stable, we have that $\sigma^*$ is Lorenz-Like, so $\sigma^*=\sigma$, then $\Lambda$ satisfies the property $(S)$ and by the theorem \ref{teor1}, $\Lambda$ is an attracting.
\end{proof}

In the case that the Lyapunov stable sectional-hyperbolic set is of codimension one, we can replace, the hypothesis that the property $(P)$ is fulfilled in the corollary \ref{coro1}, by transitivity. Let $p,q \in M$ we will say that $p \prec q$ if for all $\epsilon>0$ there is a trajectory from a point $\epsilon$-close to $p$ to a point $\epsilon$-close to $q$. 

\begin{teor}[\textbf{Main}]
Let sectional-hyperbolic Lyapunov stable set $\Lambda$ of a vector field $X$ over $M$ of codimension 1, with a unique singularity Lorenz-like, which is of boundary type. If $\Lambda=\omega_X(q)$  $(\text{or } \Lambda = \alpha_X(q))$ for some point $q\in M$, then $\Lambda$ is an attracting of $X$.
\end{teor}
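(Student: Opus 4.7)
The plan is to verify property $(S)$ for $\Lambda$ and then invoke Theorem \ref{teor1}, whose boundary-type hypothesis is precisely what is supplied. First, since $\Lambda = \omega_X(q)$ or $\Lambda = \alpha_X(q)$ and $\Lambda$ is not the singleton $\{\sigma\}$ (a Lorenz-like singularity is a saddle, hence not Lyapunov stable on its own), Corollary \ref{coro2} forces every singularity of $\Lambda$ to be Lorenz-like, so by the uniqueness hypothesis $\sigma$ is the only singularity in $\Lambda$ whatsoever.

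To supply property $(S)$ it suffices to produce one hyperbolic periodic orbit $O \subset \Lambda$ with $W^u(O) \cap W^s(\sigma) \neq \emptyset$. I would first invoke Theorem 1.1 of \cite{aml} to obtain a non-trivial homoclinic class inside $\Lambda$, which furnishes a hyperbolic periodic orbit $O$; its unstable manifold $W^u(O)$ lies in $\Lambda$ by Lyapunov stability together with the hyperbolic lemma. Next, using transitivity, I would establish the weak relation $O \prec \sigma$: in the case $\Lambda = \omega_X(q)$, for any $\varepsilon > 0$ one chooses times $t_1 < t_2$ with $X_{t_1}(q)$ within $\varepsilon$ of a point of $O$ and $X_{t_2}(q)$ within $\varepsilon$ of $\sigma$, exploiting that the forward orbit of $q$ accumulates on both points infinitely often so that the interlacing $t_1 < t_2$ can always be arranged; the case $\Lambda = \alpha_X(q)$ is dual, interlacing the approach times along the backward orbit of $q$ so that the resulting forward trajectory segment still runs from near $O$ to near $\sigma$. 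The decisive step is then to upgrade $O \prec \sigma$ to a genuine heteroclinic intersection $W^u(O) \cap W^s(\sigma) \neq \emptyset$ by applying the codimension-one connecting lemma for sectional-hyperbolic sets from \cite{bss}, whose hypotheses are exactly codimension one, $\sigma$ Lorenz-like of boundary type, and a weak orbit relation of the form $O \prec \sigma$. Property $(S)$ then holds, and Theorem \ref{teor1} closes the argument.

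The hard part is this third step: one must check that the precise formulation of the connecting lemma in \cite{bss} accepts exactly the weak relation produced by transitivity, and that the combination of codimension one with $\sigma$ being of boundary type is what supplies the geometric structure making the upgrade possible (in higher codimension, or without boundary-type, no analogous connection should be expected). A minor bookkeeping point is confirming that the times $t_1 < t_2$ can be chosen along a single orbit of $q$, which uses only that both $O$ and $\sigma$ lie in $\omega_X(q)$ (respectively $\alpha_X(q)$), so that the required close-approach events occur infinitely often along the corresponding semi-orbit.
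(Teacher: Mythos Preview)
Your proposal is correct and follows essentially the same route as the paper: reduce to property $(S)$ via Corollary~\ref{coro2} and Theorem~1.1 of \cite{aml}, establish $p \prec \sigma$ from the orbit of $q$, and then invoke the codimension-one connecting lemma of \cite{bss} (the paper cites it as Theorem~10 there, whose output is a point $x\in\Lambda$ with $\alpha_X(x)=\alpha_X(p)$ and $\omega_X(x)=\{\sigma\}$) to obtain $W^u(p)\cap W^s(\sigma)\neq\emptyset$, after which Theorem~\ref{teor1} finishes. Your extra justifications (why $\Lambda\neq\{\sigma\}$, and the interlacing of approach times to get $p\prec\sigma$) are points the paper leaves implicit.
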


\begin{proof}
Let us verify that under the assumptions of the theorem, $\Lambda$ satisfies the property $(S)$. Let $\sigma$ the only Lorenz-like singularity in $\Lambda$, by theorem 1.1 in \cite{aml}, $\Lambda$ has a nontrivial homoclinic class, and therefore a periodic point $p$. If there exists $q\in M$ such that   $\alpha_X(q)=\Lambda$ or $\omega_X(q)=\Lambda$, by the corollary \ref{coro2}, every singularity in $\Lambda$ is Lorenz-like, then $\sigma$ is the unique singularity of $\Lambda$. Also using the orbit of $q$ we have that $p \prec \sigma$, and since $\Lambda$ satisfies the conditions of the theorem 10 in \cite{bss}, so there exists $x\in \Lambda$ such that $\alpha_X(x)=\alpha_X(p)$ and $\omega_X(x)$ is a singularity, and this case, necessarily, $\omega_X(x) =\{\sigma\}$; that is,  $W^u(p)\cap W^s(\sigma) \ne \emptyset$. Then by theorem \ref{teor1}, $\Lambda$ is an attracting of $X$. 
\end{proof}

As direct consequence of the main theorem we have that: 

\begin{coro}
Let sectional-hyperbolic transitive Lyapunov stable set $\Lambda$ of a vector field $X$ over $M$ of codimension 1, with a unique singularity Lorenz-like, which is of boundary type, then $\Lambda$ is an attractor of $X$.
\end{coro}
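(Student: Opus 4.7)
The plan is to obtain this corollary as a direct application of the Main Theorem proved just before it. Recall that by definition $\Lambda$ is transitive whenever there exists $p\in\Lambda$ with $\omega_X(p)=\Lambda$, so transitivity immediately supplies the hypothesis ``$\Lambda=\omega_X(q)$ for some $q\in M$'' that appears in the Main Theorem, simply by taking $q=p$.

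First I would read off the remaining hypotheses: the corollary assumes $\Lambda$ is sectional-hyperbolic of codimension $1$, Lyapunov stable, and has a unique Lorenz-like singularity of boundary type. Together with the omega-limit description furnished by transitivity, all hypotheses of the Main Theorem are met, so I can invoke it and conclude that $\Lambda$ is attracting; that is, there exists a neighborhood $U$ of $\Lambda$ with $\Lambda = \bigcap_{t\geq 0} X_t(U)$.

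Finally, I would close the argument by recalling the definition of \emph{attractor} given in the introduction: an attractor is an attracting set that is also transitive. Since $\Lambda$ is attracting by the previous step and transitive by hypothesis, $\Lambda$ is an attractor of $X$, as required. There is no genuine obstacle here; the only content of the argument is observing that transitivity is exactly what provides the omega-limit point needed to feed into the Main Theorem, after which the conclusion follows by unwinding definitions.
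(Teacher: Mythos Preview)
Your proposal is correct and matches the paper's own treatment: the paper states this corollary immediately after the Main Theorem with the remark ``As direct consequence of the main theorem we have that'' and gives no further proof. You have made explicit exactly the two observations needed, namely that transitivity supplies the point $q$ with $\Lambda=\omega_X(q)$ for the Main Theorem, and that attracting plus transitive equals attractor.
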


\end{document}